\numberwithin{figure}{section}
\theoremstyle{plain}
\newtheorem{thm}{Theorem}[section]
\newtheorem{prop}[thm]{Proposition}
\newtheorem{cor}{Corollary}[thm]
\theoremstyle{definition}
\newtheorem{defn}{Definition}[section]
\theoremstyle{remark}
\title[Gradient $\rho$-Einstein solitons]{ Splitting theorem of Gradient $\rho$-Einstein solitons}
\author[A. A. Shaikh, P. Mandal and C. K. Mondal]{Absos Ali Shaikh$^{*1}$, Prosenjit Mandal$^{2}$ and Chandan Kumar Mondal $^{3}$}
\address{$^{1,2}$Department of Mathematics,\newline University of
Burdwan, Golapbag,\newline Burdwan-713104,\newline West Bengal, India.}
\address{$^3$Department of Mathematics,\newline School of Sciences,\newline Durgapur Regional Center\newline Netaji Subhas Open University,\newline Durgapur-713214,\newline West Bengal, India.}
\email{$^1$aask2003@yahoo.co.in, aashaikh@math.buruniv.ac.in}
\email{$^2$prosenjitmandal235@gmail.com}
\email{$^3$chan.alge@gmail.com, chandanmondal@wbnsou.ac.in}
\begin{document}
\begin{abstract}
In this paper, we have proved a weighted Laplacian comparison of distance function for manifolds with Bakry-\'Emery curvature bounded from below. Next, we have shown that a gradient $\rho$-Einstein soliton with a bounded integral condition on Ricci curvature splits off a line isometrically. Moreover, using this result, we have established some boundedness conditions on scalar curvature of gradient $\rho$-Einstein soliton. 
\end{abstract}
\noindent\footnotetext{${*}$ Corresponding author.\\
$\mathbf{2010}$\hspace{5pt}Mathematics\; Subject\; Classification: 53C21; 53C25; 53C44; 35K15.\\ 
{Key words and phrases: $\rho$-Einstein solitons; scalar curvature; splitting theorem; Schouten solitons; Busemann function; Riemannian manifolds.}} 
\maketitle

\section{Introduction}
The Ricci flow, also known as Hamilton's Ricci flow in differential geometry, is a particular form of the partial differential equation with a Riemannian metric that is extensively explored by many geometers. It is sometimes referred to as an analogue to the heat equation because of the formal resemblance in the mathematical structure of the equation. Nonetheless, it exhibits a number of phenomena not encountered in the study of the heat equation. It was initially devised by Hamilton and is known as the Ricci flow after the appearance of the Ricci tensor in its explanation. He used it to establish a three-dimensional sphere theorem in the early 1900s \cite{HA82}. In 2002 and 2003, Perelman \cite{PE02}\cite{PE03} published various new findings on the Ricci flow, including a unique form of several technical features of Hamilton's approach that was not previously known. As a result of Hamilton and Perelman's work, the Thurston conjecture, which includes as a specific instance the Poincar\'e conjecture, which had been a well-known open problem in the subject of geometric topology since 1904, is now commonly recognized as having been proved.
For study on Ricci flow, see \cite{Chow-B}.
\par The splitting theorem of Riemannian manifolds was first proposed by Cheeger and Gromoll \cite{CG71}. They showed that if a Riemannian manifold has nonnegative Ricci curvature, it splits isometrically. Innami \cite{IN82} proved the splitting theorem for Riemannian manifold admitting a non-trivial affine function. Later, Mondal and Shaikh \cite{CA2020} generalized this by establishing in case of convex functions. Munteanu and Wang \cite{MW14} proved that a manifold with Bakry-\'Emery curvature bounded from below and the potential function with quadratical
growth  will split off a line isometrically. They also established an weighted Laplacian comparison
theorem for manifolds with Bakry-\'Emery curvature bounded from below. In this paper we have generalized the results of Munteanu and Wang \cite{MW14} for gradient $\rho$-Einstein solitons and derived a splitting theorem (see, Theorem \ref{th3}). For a discussion on splitting theorem in Ricci soliton, see the survey article \cite{LG20}.
\par The paper is structured as follows. First section deals with various preliminary notions and definitions. In section 2, estimation of distance function in $\rho$-Einstein soliton is obtained. Finally, in section 3, a splitting theorem of gradient $\rho$-Einstein soliton is provided. Moreover, an estimation of scalar curvature is also deduced for gradient $\rho$-Einstein soliton.
\section{Preliminaries and definitions}
The study of Ricci solitons is quite essential to explore the Ricci flow. An $n(\geq 2)$-dimensional Riemannian manifold $(M, g)$ with Riemannian metric $g$ is said to be a gradient Ricci soliton if it obeys the following condition:
\begin{equation}\label{rs1}
 Ric + \nabla^2 f = \mu g,
 \end{equation}
where $\nabla^2f$ stands for the Hessian of a smooth function $f\in C^{\infty}(M)$, $Ric$ is the Ricci curvature tensor and $\mu \in \mathbb{R}$. A Ricci soliton $(M, g)$ is called expanding if $\mu <0$, steady if $\mu=0$ or shrinking if $\mu > 0$. For some result of Ricci soliton see \cite{CA2019,CA2020,AC2021}. In general, it is natural to consider geometric flows of the following type on an $n(\geq 3)$- dimensional Riemannian manifold $(M,g)$:
$$\frac{\partial g}{\partial t}=-2({\rm Ric}-\rho Rg),$$
where $R$ denotes the scalar curvature of the metric $g$ and $\rho\in\mathbb{R}\diagdown\{0\}$. The parabolic theory for these flows was developed by Catino et al. \cite{Catino13}, which was first considered by Bourguignon \cite{Bour}. They called such a flow as Ricci-Bourguignon flows. They defined the following notion of
 $\rho$-Einstein soliton.
 
 \begin{defn}
Let $(M,g)$ be a Riemannian manifold of dimension $n(\geq 3)$, and let $\rho\in\mathbb{R}$, $\rho\neq 0$. Then $M$ is called a $\rho$-Einstein soliton if there is a smooth vector field $X$ such that

\begin{equation}\label{a2}
{\rm Ric}+\frac{1}{2}\mathcal{L}_Xg-\rho Rg=\mu g,
\end{equation}
where $Ric$ is the Ricci curvature tensor, $\mu$ is a constant and  $\mathcal{L}_Xg$ represents the Lie derivative of $g$ in the direction of the vector field $X$. Throughout the paper such
a $\rho$-Einstein soliton will be denoted by $(M, g, X)$.
\end{defn}
If there exists a smooth function $f:M\rightarrow\mathbb{R}$ such that $X=\nabla f$ then the $\rho$-Einstein soliton is called a gradient $\rho$-Einstein soliton, denoted by $(M,g,f)$ and in this case (\ref{a2}) takes the form 
 
\begin{equation}\label{res1}
 Ric+\nabla^2 f-\rho Rg=\mu g.
\end{equation}
The function $f$ is called a $\rho$-Einstein potential of the gradient $\rho$-Einstein soliton. As usual, a $\rho$-Einstein soliton is called steady, shrinking or expanding according as $\mu$ is zero, positive or negative respectively. By rescaling the metric $g$ we may assume that $\mu \in \{-\frac{1}{2}, 0,\frac{1}{2}\}$. For more information on $\rho$-Einstein solitons, see \cite{CA2020, Absos, AAP2021} and the references therein.
In particular, if $\rho=\frac{1}{2}$  (resp., $\frac{1}{n} \text{ or } \frac{1}{2(n-1)} )$, then  a gradient $\rho$-Einstein soliton  is called a gradient Einstein soliton (resp., a gradient traceless Ricci soliton or a gradient Schouten soliton).
\section{Estimation of Laplacian for the distance function}
The Bakry-\'Emery curvature is given by $$Ric_f=Ric+\nabla^2 f.$$ For a Riemannian manifold with a smooth measure, the Bakry-\'Emery tensor is an analogue of the Ricci tensor. This tensor and its relevance to diffusion processes were extensively examined by Bakry and \'Emery \cite{BE85}. The Bakry-\'Emery tensor may also be found in a variety of subjects, see e.g. \cite{MI68}. The weighted Laplacian is given by $$\Delta_\phi f=\Delta f-\langle \nabla \phi,\nabla f \rangle,$$
where $\phi$ is some smooth function on $M$.

Suppose $(M,g,e^{-f} dvol)$ as a smooth metric measure space such that $Ric+\nabla^2 f\geq\mu g+\rho Rg$, where $e^{-f} dvol$ is the Riemannian volume density on $M$.  In the geodesic coordinates, for a fixed point $p\in M$ and $r>0$, we denote the volume form by $$dV|_{exp_p(r\xi)}=J(p,r,\xi)dr d\xi,$$
where $\xi\in S_pM$, the unit tangent sphere at $p$. If $x\in M$ is any point outside the cut locus of $p$, such that $x=exp(r\xi)$, then
$$\Delta d(p,x)=\frac{d}{dr}ln J(p,r,\xi).$$
In the following, we will omit the dependence of these quantities on $p$ and $\xi$. If $\gamma$ be a minimizing geodesic starting from $p$ and $m(r):=\frac{d}{dr}lnJ(r)$, then along $\gamma$, we get (see e.g. \cite{Li93})
\begin{equation}\label{eq1}
m'(r)+\frac{1}{n-1}m^2(r)+Ric\big(\frac{\partial}{\partial r},\frac{\partial}{\partial r}\big)\leq 0,  
\end{equation}
here the differentiation is taken with respect to the variable $r$.
Multiplying  $(\ref{eq1})$ by $r^2$ and then integrating from $r=0$ to $r=t>0$ (see e.g.  \cite{GW09}), we obtain
\begin{equation}
\int_{0}^{t}m'(r)r^2 dr+\frac{1}{n-1}\int_{0}^{t}r^2m^2(r) dr+\mu\int_{0}^{t}r^2 dr+\rho\int_{0}^{t}Rr^2 dr\leq \int_{0}^{t}f''(r)r^2 dr,
\end{equation}
in the above relation, we have used $Ric+\nabla^2 f\geq\mu g+\rho Rg$ and $f''(r):=\nabla^2 f(\frac{\partial}{\partial r},\frac{\partial}{\partial r})=\frac{d^2}{dr^2}(f\circ\gamma)(r).$
After rearranging the terms, the above inequality takes the form
\begin{equation}
m(t)t^2+\frac{1}{n-1}\int_{0}^{t}\{m(r)r-(n-1)\}^2dr+\rho\int_{0}^{t}Rr^2\leq(n-1)t+t^2f'(t)-2\int_{0}^{t}f'(r)r dr-\mu\frac{t^3}{3}.
\end{equation}
Cancelling positive terms from left side of the above inequality and then dividing by $t^2$, it yields
\begin{equation}\label{eq2}
m(t)\leq\frac{n-1}{t}+f'(t)-\frac{2}{t^2}\int_{0}^{t}f'(s)s ds-\mu\frac{t}{3}-\frac{\rho}{t^2}\int_{0}^{t}s^2R ds.
\end{equation}
Integrating $\ref{eq2}$, from $t=\epsilon$ to $t=r>\epsilon$ for $\epsilon>0$ small, then it implies
\begin{eqnarray}
\nonumber ln J(r)-ln J(\epsilon)&\leq &(n-1)(ln r-ln \epsilon)-\frac{\mu}{6}(r^2-\epsilon^2)+f(r)-f(\epsilon)\\
&&-\int_{\epsilon}^{r}\frac{2}{t^2}\Big(\int_{0}^{t}sf'(s)ds\Big)dt-\int_{\epsilon}^{r}\frac{\rho}{t^2}\Big(\int_{0}^{t}s^2R ds\Big)dt.
\end{eqnarray}
Taking limit as $\epsilon\rightarrow 0$, above inequality yields
\begin{equation}
ln J(r)\leq(n-1)ln r-\frac{\mu}{6}r^2-\big(f(r)-f(0)\big)+\frac{2}{r}\int_{0}^{r}sf'(s)ds+\frac{\rho}{r}\int_{0}^{r}s^2 R ds-\rho\int_{0}^{r}s Rds.
\end{equation}
Again, rearranging the terms, we get
\begin{equation}
-\frac{2}{r^2}\int_{0}^{r}sf'(s)ds-\frac{\rho}{r^2}\int_{0}^{r}s^2 R ds\leq -\frac{1}{r}ln \Big( \frac{J(r)}{r^{n-1}}\Big)-\frac{\mu}{6}r-\frac{1}{r}\big(f(r)-f(0)\big)-\frac{\rho}{r}\int_{0}^{r}s Rds,
\end{equation}
with the help of $(\ref{eq2})$, the above inequality can be written as
\begin{equation}
m(r)\leq\frac{n-1}{r}+f'(r)-\mu\frac{r}{3}-\frac{1}{r}ln \Big( \frac{J(r)}{r^{n-1}}\Big)-\frac{\mu}{6}r-\frac{1}{r}\big(f(r)-f(0)\big)-\frac{\rho}{r}\int_{0}^{r}s Rds.
\end{equation}
The importance of the above equation lies in the fact that it now involves $f$ only at the two end points $p$ and $x$ of the geodesic $\gamma$.
Thus we can rewrite the above inequality in the following form:
\begin{equation}
\Delta_fd(p,x)\leq\frac{n-1}{r}-\mu\frac{r}{2}-\frac{1}{r}ln \Big( \frac{J(p,r,\xi)}{r^{n-1}}\Big)-\frac{1}{r}\big(f(x)-f(p)\big)-\frac{\rho}{r}\int_{0}^{r}s Rds,
\end{equation}
Thus we can write the following result from above relation:
\begin{thm}\label{lem1}
Let $ \gamma$ be a minimizing normal geodesic in $M$ with $p=\gamma (0)$ and $x=\gamma (r)$. If $Ric+\nabla^2 f\geq\mu g+\rho Rg$, then
\begin{equation*}
\Delta d(p,x)\leq\frac{n-1}{r}-\mu\frac{r}{2}+f'(r)-\frac{1}{r}ln \Big( \frac{J(r)}{r^{n-1}}\Big)-\frac{1}{r}\big(f(r)-f(0)\big)-\frac{\rho}{r}\int_{0}^{r}s Rds
\end{equation*}
and
\begin{equation*}
\Delta_fd(p,x)\leq\frac{n-1}{r}-\mu\frac{r}{2}-\frac{1}{r}ln \Big( \frac{J(p,r,\xi)}{r^{n-1}}\Big)-\frac{1}{r}\big(f(x)-f(p)\big)-\frac{\rho}{r}\int_{0}^{r}s Rds.
\end{equation*}
\end{thm}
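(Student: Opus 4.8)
The plan is to start from the classical Riccati inequality for the logarithmic derivative of the volume Jacobian along a minimizing geodesic and then feed in the Bakry-\'Emery lower bound coming from the soliton equation (\ref{res1}). Writing $m(r)=\frac{d}{dr}\ln J(r)$ for the mean curvature of the geodesic spheres centered at $p$, the second variation of arc length (equivalently, the matrix Riccati equation for the shape operator of the geodesic spheres together with the Cauchy--Schwarz inequality $|S|^2\geq\frac{1}{n-1}(\operatorname{tr}S)^2$) yields the differential inequality (\ref{eq1}). This is the only place where Riemannian geometry proper enters; everything afterwards is a disciplined sequence of integrations.

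First I would use the hypothesis $Ric+\nabla^2 f\geq\mu g+\rho Rg$ evaluated on the unit radial field $\partial_r$ to replace the Ricci term in (\ref{eq1}) by $\mu+\rho R-f''(r)$, where $f''(r)=\nabla^2 f(\partial_r,\partial_r)$. The resulting inequality still contains the unknown $f''$, so the crucial next step is the weighted integration trick of Wei--Wylie and Munteanu--Wang: multiply through by $r^2$ and integrate from $0$ to $t$. Integration by parts converts $\int_0^t m' r^2\,dr$ into $m(t)t^2-2\int_0^t m r\,dr$, the boundary contribution at $0$ vanishing because $m(r)r\to n-1$, and likewise $\int_0^t f'' r^2\,dr$ into $f'(t)t^2-2\int_0^t f' r\,dr$. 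The role of the weight $r^2$ is precisely to trade pointwise information about $\nabla^2 f$ for boundary data of $f'$, which is what makes the estimate usable with no pointwise Hessian bound.

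Having integrated, I would complete the square in the quadratic term, rewriting $\frac{1}{n-1}\int_0^t r^2 m^2\,dr-2\int_0^t m r\,dr$ as $\frac{1}{n-1}\int_0^t\{m(r)r-(n-1)\}^2\,dr-(n-1)t$. Discarding the manifestly non-negative square integral and dividing by $t^2$ produces the pointwise bound (\ref{eq2}) for $m(t)=\Delta d(p,x)$, now carrying the scalar-curvature integral $-\frac{\rho}{t^2}\int_0^t s^2 R\,ds$ contributed by the $\rho Rg$ term. To localize $f$ to the two endpoints of $\gamma$ I would integrate (\ref{eq2}) a second time from $\epsilon$ to $r$, integrate by parts once more on the iterated $f'$ integral, and let $\epsilon\to 0$; this replaces the double integral of $f'$ by the difference $f(r)-f(0)$ together with the term $-\frac{1}{r}\ln\bigl(J(r)/r^{n-1}\bigr)$. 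Reading the outcome as a bound on $\Delta d=m$ gives the first inequality, and subtracting $f'(r)=\langle\nabla f,\nabla d\rangle$ in accordance with $\Delta_f d=\Delta d-f'(r)$ (so that $f(r)-f(0)$ becomes $f(x)-f(p)$) gives the weighted form.

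The main obstacle I anticipate is bookkeeping rather than conceptual: one must carry the scalar-curvature integrals faithfully through both rounds of integration by parts and verify that every boundary term at $r=0$ is harmless, using the finiteness of $J(r)/r^{n-1}$ and the asymptotics $m(r)\sim(n-1)/r$. The only genuine inequality steps are the Cauchy--Schwarz estimate built into (\ref{eq1}) and the discarding of the non-negative square integral; all the remaining manipulations are equalities, so the proof amounts to executing the integration scheme already displayed above and collecting the two resulting forms.
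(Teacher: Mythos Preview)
Your proposal is correct and follows essentially the same route as the paper: start from the Riccati inequality (\ref{eq1}), apply the $r^2$-weighted integration of Wei--Wylie/Munteanu--Wang together with the curvature hypothesis, complete the square and discard it to obtain (\ref{eq2}), then integrate once more and feed the result back into (\ref{eq2}) to localize $f$ at the endpoints. The paper carries out exactly this sequence of manipulations in the paragraphs preceding the theorem, so your outline matches it step for step.
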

\section{Splitting theorem of gradient $\rho$-Einstein solitons}
\begin{thm}\label{th3}
Let $(M,g,f)$ be a gradient $\rho$-Einstein soliton with $\rho R\geq 0$. If there is a geodesic line $\gamma:(-\infty,\infty)\rightarrow M$ satisfying
\begin{equation}\label{eq10}
\liminf\limits_{t\rightarrow \infty}\int_{0}^{t}Ric\big(\gamma'(s),\gamma'(s)\big) ds+\liminf\limits_{t\rightarrow -\infty}\int_{t}^{0}Ric\big(\gamma'(s),\gamma'(s)\big) ds\geq 0,
\end{equation}
and
\begin{equation*}
\int_{0}^{t}\int_{0}^{s}Rd\tau ds-\int_{0}^{r}s Rds \text{  is finite}
\end{equation*}
then $M$ splits off a line isometrically.
\end{thm}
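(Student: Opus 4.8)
The plan is to adapt the Cheeger--Gromoll argument to the weighted (Bakry--\'Emery) setting, using Theorem \ref{lem1} as the comparison engine. First I would associate to the line $\gamma$ the two Busemann functions
\[
b^{+}(x)=\lim_{t\to\infty}\big(t-d(x,\gamma(t))\big),\qquad b^{-}(x)=\lim_{t\to\infty}\big(t-d(x,\gamma(-t))\big),
\]
which are finite and $1$-Lipschitz. The triangle inequality together with the minimality of $\gamma$ gives $b^{+}+b^{-}\le 0$ on $M$, with equality along $\gamma$ (where $b^{\pm}(\gamma(0))=0$), so $b^{+}+b^{-}$ attains its maximum value $0$ on the line.

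The core analytic step is to prove that $b^{+}$ and $b^{-}$ are $f$-subharmonic in the barrier sense. For this I would fix $x$, set $p=\gamma(t)$ and $r=d(\gamma(t),x)$, and feed this into the second inequality of Theorem \ref{lem1}; since $\Delta_f b^{+}_t=-\Delta_f d(\gamma(t),\cdot)$ for the approximating functions $b^{+}_t(x)=t-d(\gamma(t),x)$, subharmonicity amounts to showing that the $\limsup$ as $t\to\infty$ of the right-hand side of Theorem \ref{lem1} is $\le 0$. Here all three hypotheses enter: the term $\tfrac{n-1}{r}\to 0$; the hypothesis $\rho R\ge 0$ fixes the sign of $-\tfrac{\rho}{r}\int_{0}^{r}sR\,ds$, while the finiteness of $\int_{0}^{t}\!\int_{0}^{s}R\,d\tau\,ds-\int_{0}^{r}sR\,ds$ keeps the scalar-curvature contribution bounded as $t\to\infty$; and the integral Ricci condition \eqref{eq10} is what controls the Jacobian term $-\tfrac{1}{r}\ln\big(J/r^{n-1}\big)$ through the Riccati inequality \eqref{eq1}, so that it does not obstruct the limit. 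The same estimate run at $\gamma(-t)$ with the second $\liminf$ in \eqref{eq10} handles $b^{-}$.

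Granting subharmonicity, $b^{+}+b^{-}$ is an $f$-subharmonic function that is $\le 0$ and attains its maximum $0$ at an interior point of $M$; the strong maximum principle for the elliptic operator $\Delta_f$ then forces $b^{+}+b^{-}\equiv 0$. Consequently $b:=b^{+}=-b^{-}$ satisfies $\Delta_f b\ge 0$ and $\Delta_f b=-\Delta_f b^{-}\le 0$ simultaneously, so $b$ is $f$-harmonic; elliptic regularity makes it smooth, and the line structure gives $|\nabla b|\equiv 1$. I would then invoke the weighted Bochner formula
\[
\tfrac12\Delta_f|\nabla b|^{2}=|\nabla^{2} b|^{2}+\langle\nabla b,\nabla\Delta_f b\rangle+Ric_f(\nabla b,\nabla b),
\]
in which the left-hand side and the middle term vanish, and substitute the soliton identity $Ric_f=Ric+\nabla^{2}f=(\mu+\rho R)g$ to obtain $|\nabla^{2} b|^{2}+(\mu+\rho R)=0$ pointwise.

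The main obstacle is precisely this last identity: concluding $\nabla^{2} b=0$ requires $\mu+\rho R\ge 0$, which is where the sign hypothesis $\rho R\ge 0$ (together with the admissible sign of $\mu$) is used; once $\nabla^{2} b=0$, the vector field $\nabla b$ is parallel and of unit length, so the de Rham decomposition theorem yields an isometric splitting of $M$ along the integral curves of $\nabla b$. I expect the genuinely delicate part to be the limit computation in the second paragraph---tracking the Jacobian term via \eqref{eq1} and checking that \eqref{eq10} and the finiteness hypothesis exactly absorb the potentially divergent contributions---rather than the maximum-principle or de Rham steps, which are standard once the $f$-subharmonicity of $b^{\pm}$ is in hand.
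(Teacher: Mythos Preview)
Your outline has a genuine gap in both the subharmonicity step and, consequently, the Bochner step.

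You aim to show $\Delta_f b^{\pm}\ge 0$ by checking that the right-hand side of Theorem~\ref{lem1} tends to something $\le 0$. But if you actually track the terms with $p=\gamma(t)$ and $r=d(\gamma(t),x)=t+O(1)$, the contribution $-\tfrac{\mu}{2}r$ does not vanish: combining it with $\tfrac{1}{r}f(\gamma(t))$ and using the twice-integrated soliton equation $f(\gamma(t))=f(0)+f'(0)t+\tfrac{\mu}{2}t^{2}-\int_0^t\!\int_0^s Ric(\gamma',\gamma')+\rho\int_0^t\!\int_0^s R$, the quadratic pieces combine to $\tfrac{\mu}{2r}(t^{2}-r^{2})\to \mu\,b^{+}(x)$, and an $f'(0)$ survives as well. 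So what the comparison delivers is not $\Delta_f b^{+}\ge 0$ but rather
\[
\Delta_f b^{+}+\mu\,b^{+}\ \ge\ -f'(0)+\liminf_{t\to\infty}\int_0^{t}Ric(\gamma',\gamma')\,ds,
\]
and the analogous inequality for $b^{-}$ with $+f'(0)$. The $f'(0)$'s cancel only after summing, yielding $(\Delta_f+\mu)(b^{+}+b^{-})\ge 0$ together with $b^{+}+b^{-}\le 0$; this is what forces $b^{+}+b^{-}\equiv 0$ (by the strong maximum principle when $\mu\le 0$, and by integrating against $e^{-f}dvol$ when $\mu>0$). Your plain $\Delta_f$-subharmonicity claim is not what the estimate gives.

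This matters crucially at the Bochner step. From $b^{+}=-b^{-}$ one obtains $\Delta_f b^{+}+\mu b^{+}=\text{const}$, hence $\langle\nabla b^{+},\nabla\Delta_f b^{+}\rangle=-\mu|\nabla b^{+}|^{2}$, and the weighted Bochner identity with $Ric_f=(\mu+\rho R)g$ gives
\[
0=|\nabla^{2}b^{+}|^{2}-\mu+(\mu+\rho R)=|\nabla^{2}b^{+}|^{2}+\rho R.
\]
The $\mu$'s cancel, and $\rho R\ge 0$ alone yields $\nabla^{2}b^{+}=0$. In your version $\Delta_f b=0$ would give $|\nabla^{2}b|^{2}+\mu+\rho R=0$, and for expanding solitons ($\mu<0$) the hypothesis $\rho R\ge 0$ does not force $\nabla^{2}b=0$; your parenthetical ``together with the admissible sign of $\mu$'' is an extra restriction the theorem does not assume. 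Also, the Jacobian term $\tfrac{1}{r}\ln J$ is not controlled via \eqref{eq10}; the paper disposes of it after multiplying by a test function and using $y\ln y\ge -1/e$, while \eqref{eq10} enters only through the $f(\gamma(t))$ term above.
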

\begin{proof}
Fix a compact domain $D$ in $M$, and denote $f(t)=f(\gamma(t))$ for simplicity. Applying the soliton equation $Ric+\nabla^2 f-\rho Rg=\mu g$ to $\gamma'$ and integrating twice, we have
\begin{equation}\label{eq3}
f(t)-f(0)=f'(0)t+\frac{\mu}{2}t^2-\int_{0}^{t}\int_{0}^{s}Ric\big(\gamma'(\tau),\gamma'(\tau)\big)d\tau ds+\rho\int_{0}^{t}\int_{0}^{s}Rd\tau
 ds.
\end{equation}
For $t>0$ and $x\in D$, $\tau_t(s)$ denotes the minimizing normal geodesic from $\gamma(t)$ to $x$. Let $r:=d(\gamma(t),x)$, then $\tau_t(0)=\gamma(t)$ and $\tau_t(r)=x$.
Using the Theorem \ref{lem1}, the following can be deduced:
\begin{equation}\label{eq6}
\Delta_fd(\gamma(t),x)+\frac{1}{r}ln J(\gamma(t),r,\tau'_t(0))\leq\frac{n-1}{r}(1+lnr)-\mu\frac{r}{2}+\frac{f(t)}{r}-\frac{f(x)}{r}-\frac{\rho}{r}\int_{0}^{r}s Rds.
\end{equation}
As in \cite{MW14}, for any $\epsilon>0$ and $t>0$ sufficiently large, we claim that 
\begin{eqnarray}
\frac{n-1}{r}(1+lnr)-\frac{\mu}{2}r+\frac{f(t)}{r}-\frac{f(x)}{r}&\leq& \mu(t-r)+\epsilon+f'(0)\frac{t}{r}+\frac{\rho}{r}\int_{0}^{t}\int_{0}^{s}Rd\tau ds\\
\nonumber&-&\frac{1}{r}\int_{0}^{t}\int_{0}^{s}Ric\big(\gamma'(\tau),\gamma'(\tau)\big)d\tau ds.
\end{eqnarray}
Since $x$ belongs to a fixed bounded domain $D$, for $t>0$ sufficiently large, the following holds:
\begin{equation}\label{eq4}
\frac{n-1}{r}(1+ln r)-\frac{f(x)}{r}\leq\frac{\epsilon}{2}.
\end{equation} 
In view of (\ref{eq3}), we obtain
\begin{eqnarray}
\nonumber&-&\frac{1}{2}\mu r+\frac{f(t)}{r}-\mu(t-r)=\frac{1}{4r}\big(-2\mu r^2+4f(t)-4\mu r(t-r)\big)\\
\nonumber&=&\frac{1}{4r}\Big(4f(0)+4f'(0)t+2\mu t^2-4\int_{0}^{t}\int_{0}^{s}Ric\big(\gamma'(\tau),\gamma'(\tau)\big)d\tau ds-2\mu r^2\\
\nonumber &&-4\mu r(t-r)+4\rho\int_{0}^{t}\int_{0}^{s}Rd\tau
 ds\Big)\\
 \nonumber&=&\frac{1}{4r}\Big(4f(0)+4f'(0)t+2\mu (t-r)^2-4\int_{0}^{t}\int_{0}^{s}Ric\big(\gamma'(\tau),\gamma'(\tau)\big)d\tau ds+4\rho\int_{0}^{t}\int_{0}^{s}Rd\tau
  ds\Big)\\
  \nonumber&\leq&\frac{1}{4r}\Big(4f(0)+4f'(0)t+d(x,\gamma(0))^2-4\int_{0}^{t}\int_{0}^{s}Ric\big(\gamma'(\tau),\gamma'(\tau)\big)d\tau ds+4\rho\int_{0}^{t}\int_{0}^{s}Rd\tau
    ds\Big).
\end{eqnarray}
So for $t$ sufficiently large, we have
\begin{equation}\label{eq5}
-\frac{1}{2}\mu r+\frac{f(t)}{r}-\mu(t-r)\leq\frac{\epsilon}{2}+f'(0)\frac{t}{r}-\frac{1}{r}\int_{0}^{t}\int_{0}^{s}Ric\big(\gamma'(\tau),\gamma'(\tau)\big)d\tau ds+\frac{\rho}{r}\int_{0}^{t}\int_{0}^{s}Rd\tau ds.
\end{equation}
Combining (\ref{eq4}) and (\ref{eq5}), we arrived at our claim.
Hence (\ref{eq6}) yields
\begin{eqnarray*}
\Delta_fd(x,\gamma(t))+\frac{1}{r}ln J(\gamma(t),r,\xi)&\leq& \mu(t-d(x,\gamma(t)))+\epsilon+f'(0)\frac{t}{r}+\frac{\rho}{r}\int_{0}^{t}\int_{0}^{s}Rd\tau ds\\
\nonumber&-&\frac{\rho}{r}\int_{0}^{r}s Rds-\frac{1}{r}\int_{0}^{t}\int_{0}^{s}Ric\big(\gamma'(\tau),\gamma'(\tau)\big)d\tau ds.
\end{eqnarray*}
After rearranging the terms, the above equation takes the form
\begin{eqnarray*}
\Delta_f\big(t-d(x,\gamma(t))\big)+\mu(t-d(x,\gamma(t)))&\geq&\frac{1}{r}ln J(\gamma(t),r,\xi) -\epsilon-f'(0)\frac{t}{r}-\frac{\rho}{r}\int_{0}^{t}\int_{0}^{s}Rd\tau ds\\
\nonumber&+&\frac{\rho}{r}\int_{0}^{r}s Rds+\frac{1}{r}\int_{0}^{t}\int_{0}^{s}Ric\big(\gamma'(\tau),\gamma'(\tau)\big)d\tau ds.
\end{eqnarray*}
Using triangle inequality, it is easy to see that there is a $c>0$ such that $D\subseteq B(\gamma(t),t+c)\setminus B(\gamma(t),t-c)$, so $\frac{t}{r}\rightarrow 1$ uniformly when $x\in D$.
Choosing a non-negative smooth finction with compact support $\phi\in C^{\infty}_0(D)$ and multiplying $\phi J(\gamma(t),r,\xi)$ on both sides of the above inequality, we obtain
\begin{eqnarray*}
&&\int_{M}\Delta_f\big(t-d(x,\gamma(t))\big)\phi dvol+\int_{M}\mu(t-d(x,\gamma(t)))\phi dvol\\
&&\geq\int_{0}^{\infty}\int_{S^{n-1}}\frac{ln J(\gamma(t),r,\xi)}{r}\phi J(\gamma(t),r,\xi) d\xi dr -\epsilon   \int_{M}\phi dvol-f'(0)\int_{M}\frac{t}{r}\phi dvol\\
&&-\int_{0}^{\infty}\int_{S^{n-1}}\frac{\rho \int_{0}^{t}\int_{0}^{s}Rd\tau ds}{r}\phi J(\gamma(t),r,\xi) d\xi dr+\int_{0}^{\infty}\int_{S^{n-1}}\frac{\rho \int_{0}^{r}s Rds}{r}\phi J(\gamma(t),r,\xi) d\xi dr\\
&&+\int_{0}^{\infty}\int_{S^{n-1}}\frac{\int_{0}^{t}\int_{0}^{s}Ric\big(\gamma'(\tau),\gamma'(\tau)\big)d\tau ds}{r}\phi J(\gamma(t),r,\xi) d\xi dr\\
&&\geq -\frac{1}{e}\int_{0}^{\infty}\int_{S^{n-1}}\frac{\phi}{r} d\xi dr -\epsilon   \int_{M}\phi dvol-f'(0)\int_{M}\frac{t}{r}\phi dvol\\
&&-\int_{0}^{\infty}\int_{S^{n-1}}\Big\{\frac{\rho \int_{0}^{t}\int_{0}^{s}Rd\tau ds-\rho \int_{0}^{r}s Rds}{r}\Big\}\phi J(\gamma(t),r,\xi) d\xi dr\\
&&+\int_{0}^{\infty}\int_{S^{n-1}}\frac{\int_{0}^{t}\int_{0}^{s}Ric\big(\gamma'(\tau),\gamma'(\tau)\big)d\tau ds}{r}\phi J(\gamma(t),r,\xi) d\xi dr\\
&&\geq -\frac{1}{e\cdot t}\int_{0}^{\infty}\int_{S^{n-1}}\frac{\phi \cdot t}{r} d\xi dr -\epsilon   \int_{M}\phi dvol-f'(0)\int_{M}\frac{t}{r}\phi dvol\\
&&-\frac{\rho \int_{0}^{t}\int_{0}^{s}Rd\tau ds-\rho \int_{0}^{r}s Rds}{t}\int_{0}^{\infty}\int_{S^{n-1}}\frac{\phi \cdot t}{r} J(\gamma(t),r,\xi) d\xi dr\\
&&+\frac{\int_{0}^{t}\int_{0}^{s}Ric\big(\gamma'(\tau),\gamma'(\tau)\big)d\tau ds}{t}\int_{0}^{\infty}\int_{S^{n-1}}\frac{t}{r}\phi J(\gamma(t),r,\xi) d\xi dr,
\end{eqnarray*}
in the second inequality we have used ($y$ $ln y)\geq -\frac{1}{e}$ for $y> 0$.
The Busemann function associated to $\gamma:[0,\infty)\rightarrow M$ \big(resp., $\gamma:(-\infty,0]\rightarrow M$ \big) is given by
$$\beta_+(x)=\lim\limits_{t\rightarrow \infty}(t-d(x,\gamma (t))) \big(\text{ resp., } \beta_-(x)=\lim\limits_{t\rightarrow \infty}(t-d(x,\gamma (-t)))\big).$$
Since
\begin{equation*}
\liminf\limits_{t\rightarrow \infty}\frac{\int_{0}^{t}\int_{0}^{s}Ric\big(\gamma'(\tau),\gamma'(\tau)\big)d\tau ds}{t}\geq\liminf\limits_{t\rightarrow \infty}\int_{0}^{t}Ric\big(\gamma'(s),\gamma'(s)\big) ds,
\end{equation*}
 considering $\liminf_{t\rightarrow \infty}$ on the above inequality, it follows that the following inequality hods in the sense of distribution
\begin{equation*}
\Delta_f\beta_++\mu\beta_+\geq-\epsilon-f'(0)+\liminf\limits_{t\rightarrow \infty}\int_{0}^{t}Ric\big(\gamma'(s),\gamma'(s)\big) ds.
\end{equation*}
Since $\epsilon$ is arbitrary, hence
\begin{equation}\label{eq7}
\Delta_f\beta_++\mu\beta_+\geq-f'(0)+\liminf\limits_{t\rightarrow \infty}\int_{0}^{t}Ric\big(\gamma'(s),\gamma'(s)\big) ds.
\end{equation}
Similarly, we get
\begin{equation}\label{eq8}
\Delta_f\beta_-+\mu\beta_-\geq f'(0)+\liminf\limits_{t\rightarrow -\infty}\int_{t}^{0}Ric\big(\gamma'(s),\gamma'(s)\big) ds.
\end{equation}
Define $\beta=\beta_++\beta_-$, and then adding $(\ref{eq7})$ and $(\ref{eq8})$, we obtain
\begin{equation}\label{eq9}
\Delta_f\beta+\mu\beta\geq\liminf\limits_{t\rightarrow \infty}\int_{0}^{t}Ric\big(\gamma'(s),\gamma'(s)\big) ds+\liminf\limits_{t\rightarrow -\infty}\int_{t}^{0}Ric\big(\gamma'(s),\gamma'(s)\big) ds.
\end{equation}
Now from the definition of the Busemann function and $(\ref{eq10})$, we have $\beta\leq 0$ and $\Delta_f\beta+\mu\beta\geq 0.$
When $\mu=\frac{1}{2}$, integrating with respect to $e^{-f}dvol$ we get $\beta=0$. For steady or expanding case, applying the strong maximum principle, we have $\beta=0$.
Hence for a gradient $\rho$-Einstein soliton, $\beta_+=-\beta_-$.\\
Thus by virtue of $(\ref{eq10})$ and $(\ref{eq8})$, we have
\begin{equation*}
\Delta_f(-\beta_+)+\mu(-\beta_+)\geq f'(0)-\liminf\limits_{t\rightarrow \infty}\int_{0}^{t}Ric\big(\gamma'(s),\gamma'(s)\big) ds,
\end{equation*}
i.e.,
\begin{equation*}
\Delta_f\beta_++\mu\beta_+\leq-f'(0)+\liminf\limits_{t\rightarrow \infty}\int_{0}^{t}Ric\big(\gamma'(s),\gamma'(s)\big) ds.
\end{equation*}
Hence, combining the above inequality and $(\ref{eq7})$, it follows that
\begin{equation}
\Delta_f\beta_++\mu\beta_+=-f'(0)+\liminf\limits_{t\rightarrow \infty}\int_{0}^{t}Ric\big(\gamma'(s),\gamma'(s)\big) ds.
\end{equation}
Now using Bochner formula for weighted Laplacian, we get
\begin{eqnarray}
\frac{1}{2}\Delta_f|\nabla \beta_+|^2&&=|\nabla
^2 \beta_+|^2+\langle\nabla\Delta_f \beta_+,\nabla\beta_+\rangle+Ric_f(\nabla\beta_+,\nabla\beta_+)\\
&&=|\nabla^2 \beta_+|^2-\mu|\nabla \beta_+|^2+\mu|\nabla \beta_+|^2+\rho R|\nabla \beta_+|^2\\
&&=|\nabla^2 \beta_+|^2+\rho R|\nabla \beta_+|^2.
\end{eqnarray}
As $|\nabla\beta_+|=1$, we get $|\nabla^2\beta_+|^2=-\rho R$. Since $\rho R \geq 0$, hence $\nabla^2 \beta_+=0$. This completes the proof.
\end{proof}
The following corollary follows from the above theorem:
\begin{cor}\label{51}
Let $(M,g,f)$ be a gradient $\rho$-Einstein soliton such that there is a geodesic line $\gamma:(-\infty,\infty)\rightarrow M$ satisfying
\begin{equation*}
\liminf\limits_{t\rightarrow \infty}\int_{0}^{t}Ric\big(\gamma'(s),\gamma'(s)\big) ds+\liminf\limits_{t\rightarrow -\infty}\int_{t}^{0}Ric\big(\gamma'(s),\gamma'(s)\big) ds\geq 0
\end{equation*}
and
\begin{equation*}
\int_{0}^{t}\int_{0}^{s}Rd\tau ds-\int_{0}^{r}s Rds \text{  is finite}.
\end{equation*}
Then the scalar curvature $R\leq 0$ $(\text{ resp., } \geq 0)$ for $\rho>0$ $(\text{ resp., } <0)$.
\end{cor}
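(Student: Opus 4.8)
The plan is to rerun the proof of Theorem \ref{th3} verbatim up to, but not including, its very last sentence. I observe that the sign hypothesis $\rho R\geq 0$ of Theorem \ref{th3} is used only in that final sentence, where it converts the identity $|\nabla^2\beta_+|^2=-\rho R$ into $\nabla^2\beta_+=0$. Every earlier step---the construction of the Busemann functions $\beta_\pm$ of the two rays of $\gamma$, the distributional inequalities \eqref{eq7} and \eqref{eq8}, the conclusion $\beta=\beta_++\beta_-=0$ (by integration against $e^{-f}\,dvol$ when $\mu=\tfrac12$, and by the strong maximum principle in the steady and expanding cases), the resulting equality $\Delta_f\beta_++\mu\beta_+=-f'(0)+\liminf_{t\to\infty}\int_0^t Ric(\gamma',\gamma')\,ds$, and the Bochner computation for the weighted Laplacian---invokes only the two displayed integral conditions, which are exactly the hypotheses of the corollary. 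Consequently, under the corollary's assumptions the pointwise identity
\begin{equation*}
|\nabla^2\beta_+|^2=-\rho R
\end{equation*}
continues to hold on $M$.

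First I would confirm that $\beta_+$ is smooth with $|\nabla\beta_+|=1$, so that the Bochner formula applies pointwise and the displayed identity is valid at every point of $M$; this is the standard regularity of the Busemann function of a line once $\beta_++\beta_-=0$ has been secured, and it is already what the proof of Theorem \ref{th3} establishes. The remaining observation is elementary: the left-hand side $|\nabla^2\beta_+|^2$ is a squared norm and hence nonnegative, so
\begin{equation*}
-\rho R\geq 0,\qquad\text{i.e.}\qquad \rho R\leq 0\quad\text{on }M.
\end{equation*}
Dividing by $\rho$ and tracking its sign yields $R\leq 0$ when $\rho>0$ and $R\geq 0$ when $\rho<0$, which is precisely the assertion.

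The one point I would verify carefully---rather than a genuine obstacle---is that the derivation of $|\nabla^2\beta_+|^2=-\rho R$ is truly independent of any sign assumption on $\rho R$. I would therefore reread the chain \eqref{eq7}--\eqref{eq9} to ensure that the passage to $\beta=0$, and in particular the application of the strong maximum principle, does not tacitly use $\rho R\geq 0$. Since those steps rely only on $\beta\leq 0$ together with $\Delta_f\beta+\mu\beta\geq 0$, and the latter follows from \eqref{eq10} and the finiteness hypothesis alone, the argument goes through unchanged and the corollary follows at once.
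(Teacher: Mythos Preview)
Your proposal is correct and is exactly the argument the paper has in mind: the paper gives no separate proof beyond ``The following corollary follows from the above theorem,'' and the way it follows is precisely by reading off the identity $|\nabla^2\beta_+|^2=-\rho R$ obtained in the proof of Theorem~\ref{th3} just before the hypothesis $\rho R\geq 0$ is invoked. Your careful check that no earlier step uses that sign hypothesis, together with the observation that $|\nabla\beta_+|=1$ (a general property of Busemann functions once $\beta_+=-\beta_-$ yields smoothness via elliptic regularity), makes the deduction $\rho R\leq 0$ rigorous.
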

\begin{prop}\cite{VB2021}\label{sprop1}
Let $(M, g, f)$ be a complete non-compact non-steady Schouten soliton with a non-constant potential function $f$. Then
$$0\leq \lambda R\leq 2(n-1)\lambda^2.$$
\end{prop}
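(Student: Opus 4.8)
The plan is to specialise the gradient $\rho$-Einstein equation (\ref{res1}) to the Schouten case $\rho=\frac{1}{2(n-1)}$ and to read the bound off from the behaviour of $R$ at its extrema, the soliton constant here being the constant $\lambda$ (the $\mu$ of (\ref{res1})). Writing $\sigma:=\lambda+\frac{R}{2(n-1)}$, the soliton equation becomes $Ric+\nabla^2 f=\sigma g$. Tracing gives $\Delta f=n\lambda-\frac{n-2}{2(n-1)}R$, so $\nabla\Delta f=-\frac{n-2}{2(n-1)}\nabla R$; substituting this into the divergence of the soliton equation, together with the contracted second Bianchi identity $\mathrm{div}\,Ric=\tfrac12\nabla R$, produces the first crucial identity
\begin{equation*}
Ric(\nabla f)=\Big(\tfrac12-(n-1)\rho\Big)\nabla R=0 ,
\end{equation*}
so $\nabla f$ lies in the kernel of $Ric$ wherever it is non-zero. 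Taking one further divergence of $Ric(\nabla f)=0$ and using $\nabla^2 f=\sigma g-Ric$ gives the second crucial (and, for Schouten solitons, purely first-order) identity
\begin{equation*}
\tfrac12\langle\nabla R,\nabla f\rangle=|Ric|^2-\sigma R .
\end{equation*}

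First I would upgrade $Ric(\nabla f)=0$ to the statement that $Ric$ has $0$ as an eigenvalue at every point. Since $f$ is non-constant and a $\rho$-Einstein soliton is real-analytic in harmonic coordinates (as for Ricci solitons), the set $\{\nabla f\neq0\}$ is dense; on it $\det Ric=0$, hence $\det Ric\equiv 0$ by continuity. Consequently $Ric$ has at most $n-1$ non-zero eigenvalues, and Cauchy--Schwarz sharpens to
\begin{equation*}
|Ric|^2\ \ge\ \frac{R^2}{n-1}\qquad\text{on all of }M .
\end{equation*}
This improved pinching, replacing the usual $|Ric|^2\ge R^2/n$, is exactly what produces the sharp constant $2(n-1)\lambda$, and it is the reason the hypothesis that $f$ be non-constant is indispensable.

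Next I would evaluate the first-order identity at the extrema of $R$. At a point where $\nabla R=0$ its left-hand side vanishes, so $\sigma R=|Ric|^2\ge \frac{R^2}{n-1}$; since $\sigma=\lambda+\frac{R}{2(n-1)}$ this rearranges to $\lambda R\ge \frac{R^2}{2(n-1)}\ge0$. Applied at a minimum of $R$ this gives $\lambda R\ge0$ there, so $R\ge0$ when $\lambda>0$ and $R\le0$ when $\lambda<0$; applied at a maximum it gives $\lambda R\ge\frac{R^2}{2(n-1)}$, which upon dividing by the appropriate sign of $R$ (using $\lambda\neq0$) yields $R\le 2(n-1)\lambda$ in the shrinking case and $R\ge 2(n-1)\lambda$ in the expanding case. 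Chasing the signs in the two cases, both extrema together deliver $0\le\lambda R\le 2(n-1)\lambda^2$.

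The main obstacle is that $M$ is non-compact, so $R$ need not attain its supremum or infimum, and the governing identity is only first-order: along an Ekeland/Omori--Yau sequence $x_k$ with $R(x_k)\to\sup R$ and $\nabla R(x_k)\to0$, the term $\langle\nabla R,\nabla f\rangle$ need not vanish, because $|\nabla f|$ grows (quadratically, in the shrinking case). To close the argument I would first show that $R$ is bounded and that its extrema are effectively realised inside a compact region, using that a non-steady soliton has proper potential with compact sublevel sets, so the extremum analysis can be localised there; alternatively, one may invoke the splitting furnished by Theorem \ref{th3} to reduce $M$ to a line times an Einstein cross-section, on which $R$ is constant and the inequality is verified by a direct one-dimensional computation. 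Controlling $\langle\nabla R,\nabla f\rangle$ at near-extremal points is the technical heart of the proof.
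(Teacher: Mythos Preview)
The paper does not prove this proposition at all: it is quoted verbatim from Borges \cite{VB2021} and used as a black box to obtain the subsequent corollary. There is therefore no ``paper's own proof'' to compare your attempt against.

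That said, your derivation of the two identities $Ric(\nabla f)=0$ and $\tfrac12\langle\nabla R,\nabla f\rangle=|Ric|^2-\sigma R$ for the Schouten case is correct, as is the sharpened Cauchy--Schwarz $|Ric|^2\ge R^2/(n-1)$ coming from the zero eigenvalue of $Ric$. The algebra at a critical point of $R$ is also right: one indeed gets $\lambda R\ge \tfrac{R^2}{2(n-1)}\ge 0$ there.

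The genuine gap is exactly the one you flag yourself and do not close. On a non-compact manifold there is no reason for $R$ to attain its extrema, and since the governing identity is first-order in $R$, an Omori--Yau sequence does not automatically kill $\langle\nabla R,\nabla f\rangle$ when $|\nabla f|$ is unbounded. Your fallback suggestion of invoking Theorem~\ref{th3} is not viable: that theorem has its own hypotheses (existence of a line with the integral Ricci condition, and the finiteness condition on the $R$-integrals) which are not consequences of the Schouten soliton equation alone, so appealing to it would be circular or simply unjustified. In Borges' actual argument the non-compactness is handled by exploiting additional structure special to Schouten solitons (the potential $f$ is shown to be rectifiable/isoparametric, $R$ is a function of $f$ alone, and one reduces to an ODE analysis along the $\nabla f$-flow), which is considerably more than the extremum analysis you sketch. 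As written, your proposal is a correct outline of the compact case together with an honest admission that the main step for the stated (non-compact) result is missing.
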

Now from Theorem \ref{th3} and Proposition \ref{sprop1} we can state the following:
\begin{cor}
Let $(M, g, f)$ be a complete non-compact non-steady Schouten soliton with a non-constant potential function $f$ such that there is a geodesic line $\gamma:(-\infty,\infty)\rightarrow M$ satisfying
\begin{equation*}
\liminf\limits_{t\rightarrow \infty}\int_{0}^{t}Ric\big(\gamma'(s),\gamma'(s)\big) ds+\liminf\limits_{t\rightarrow -\infty}\int_{t}^{0}Ric\big(\gamma'(s),\gamma'(s)\big) ds\geq 0
\end{equation*}
and
\begin{equation*}
\int_{0}^{t}\int_{0}^{s}Rd\tau ds-\int_{0}^{r}s Rds \text{  is finite}.
\end{equation*}
Then $M$ is expanding and hence $2(n-1)\mu\leq R\leq 0.$
\end{cor}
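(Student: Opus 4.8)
The plan is to read the final assertion as a direct combination of Corollary \ref{51} with the curvature estimate of Proposition \ref{sprop1}, after matching notation. First I would record that a Schouten soliton is precisely the case $\rho=\frac{1}{2(n-1)}$ of a gradient $\rho$-Einstein soliton, so in particular $\rho>0$ and the two integral hypotheses imposed here are exactly those of Corollary \ref{51}. Applying that corollary in the regime $\rho>0$ gives $R\leq 0$ everywhere on $M$. Simultaneously, since $(M,g,f)$ is complete, non-compact, non-steady and has non-constant potential, Proposition \ref{sprop1} applies and, identifying the soliton constant $\lambda$ there with $\mu$, yields the two-sided bound $0\leq \mu R\leq 2(n-1)\mu^{2}$.

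The heart of the argument is then to pin down the sign of $\mu$. From the lower half of Proposition \ref{sprop1} we have $\mu R\geq 0$, while Corollary \ref{51} gives $R\leq 0$; since the soliton is non-steady, $\mu\neq 0$. I would argue that these three facts force $\mu<0$: if instead $\mu>0$, then $\mu R\geq 0$ would compel $R\geq 0$, which together with $R\leq 0$ collapses the scalar curvature to $R\equiv 0$. The delicate point, and the step I expect to be the main obstacle, is ruling out this degenerate shrinking configuration, since $R\equiv 0$ is compatible with the purely algebraic bounds. To close it I would feed $R\equiv 0$ back into the Schouten equation $Ric+\nabla^{2}f-\tfrac{1}{2(n-1)}Rg=\mu g$, use the resulting trace and contracted Bianchi identities together with the non-constancy of $f$ (or the known rigidity/classification of complete non-compact shrinking Schouten solitons), and extract a contradiction with $\mu>0$. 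Hence $M$ must be expanding, i.e.\ $\mu<0$.

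Once $\mu<0$ is in hand, the stated bounds are a routine sign computation and I would only have to keep careful track of the inequality reversals. Dividing $0\leq \mu R$ by the negative quantity $\mu$ flips the inequality to $R\leq 0$, while dividing $\mu R\leq 2(n-1)\mu^{2}$ by $\mu<0$ gives $R\geq 2(n-1)\mu$. Combining the two produces $2(n-1)\mu\leq R\leq 0$, which is exactly the claimed estimate and completes the proof.
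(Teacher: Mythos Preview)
Your overall plan coincides with what the paper intends: the corollary is simply asserted as a consequence of Theorem \ref{th3} (equivalently Corollary \ref{51}) together with Proposition \ref{sprop1}, and no further argument is supplied there. You have reconstructed exactly that combination and have in fact gone further than the paper by isolating the one point that genuinely needs justification, namely excluding the shrinking case $\mu>0$ with $R\equiv 0$.

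The difficulty is that the step you label ``delicate'' cannot be completed as you propose. The Gaussian shrinking soliton $(\mathbb{R}^n,\,g_{\mathrm{eucl}},\,f(x)=\tfrac{\mu}{2}\lvert x\rvert^2)$ with $\mu>0$ is a complete, non-compact, non-steady Schouten soliton with non-constant potential and $R\equiv 0$; it carries geodesic lines along which $Ric\equiv 0$ and $R\equiv 0$, so both integral hypotheses of the corollary hold trivially. Yet this soliton is shrinking, so the asserted conclusion ``$M$ is expanding'' fails. Feeding $R\equiv 0$ back into the Schouten equation only returns the shrinking Ricci soliton equation $Ric+\nabla^2 f=\mu g$; neither its trace, nor the contracted Bianchi identity, nor the non-constancy of $f$ obstructs $\mu>0$, since the Gaussian satisfies all of these. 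Likewise, Proposition \ref{sprop1} is saturated at its lower endpoint by $R=0$ without forcing any sign on $\mu$. Thus the gap you correctly detected is not a missing detail in your write-up but an actual lacuna in the statement: with only the cited ingredients the shrinking scenario $R\equiv 0$ cannot be excluded, and your proposed mechanism for extracting a contradiction does not go through.
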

\section{acknowledgment}
 The second author gratefully acknowledges to the CSIR(File No.:09/025(0282)/2019-EMR-I), Govt. of India for the award of JRF. Also the third author conveys sincere thanks to the Netaji Subhas Open
 University for partial financial assistance (Project No.: AC/140/2021-22).

\end{document}